\numberwithin{equation}{section}
\newtheorem{theorem}{Theorem}[section]
\newtheorem{proposition}[theorem]{Proposition}
\theoremstyle{remark}
\begin{document}

\title[]{Local central limit theorem for real eigenvalue fluctuations of elliptic {G{\SMALL in}OE} matrices}

\author{Peter J. Forrester}
\address{School of Mathematical and Statistics, The University of Melbourne, Victoria 3010, Australia}
\email{pjforr@unimelb.edu.au}

\date{}

\dedicatory{}

\keywords{}

\begin{abstract}
Random matrices from the elliptic Ginibre orthogonal ensemble (GinOE) are a certain linear combination of
a real symmetric, and real anti-symmetric, real Gaussian random matrices and controlled by a parameter
$\tau$. Our interest is in the fluctuations of the number of real eigenvalues, for fixed $\tau$ when
the expected number is proportional to the square root of the matrix size $N$, and for $\tau$ scaled
to the weakly non-symmetric limit, when the number of eigenvalues is proportional to $N$. By establishing
that the generating function for the probabilities specifying the distribution of the number of real
eigenvalues has only negative real zeros, and using too the fact that variances in both circumstances
of interest tends to infinity as $N \to \infty$, the known central limit theorem for the fluctuations is strengthened
to a local central limit theorem, and the rate of convergence is discussed.
\end{abstract}

\maketitle

\section{Introduction}

A property of non-Hermitian random matrices with real entries is that in general real eigenvalues occur with non-zero
probability. The first such random matrices to be studied in detail from this viewpoint was the case of standard Gaussian
entries \cite{EKS94,Ed97}. This set of random matrices is now referred to as the Ginibre orthogonal ensembles (GinOE); see the recent review
\cite{BF23}. Let $N_{\mathbb R}$ denote the random variable for the number of real eigenvalues. An early finding was for
the large matrix size $N$  of the expected value of $N_{\mathbb R}$  \cite{EKS94},
\begin{equation}\label{E0}
 \mathbb E (N_{\mathbb R})    \mathop{\sim}\limits_{N \to \infty} \sqrt{ {2 N \over \pi}}.
   \end{equation} 
   Later the large $N$ form of the corresponding variance was shown to be proportional to expected value  \cite{FN07},
  \begin{equation}\label{E1} 
  \sigma^2(N_{\mathbb R})   \mathop{\sim}\limits_{N \to \infty} (2 - \sqrt{2})  \mathbb E (N_{\mathbb R}).
   \end{equation}   

Ask now about the large $N$ form of the scaled distribution
\begin{equation}\label{1.1}
{\rm Pr} \Big ( { N_{\mathbb R}  - \mathbb E (N_{\mathbb R})  \over \sigma(N_{\mathbb R})}  \le x  \Big ).
\end{equation}
As part of a more general study probing the asymptotic distribution of a scaled polynomial
linear statistic $\sum_{j=1}^N p(\lambda_j/\sqrt{N})$  
\cite{Si17}, and extended to a more general class
of test functions in \cite{FS21}, it was proved (choose $p(x) = 1$) that (\ref{1.1}) limits to a standard normal distribution. Equivalently
this establishes the central limit theorem (CLT) for the scaled fluctuation of $N_{\mathbb R}$,
\begin{equation}\label{1.1a}
\lim_{N \to \infty}  { N_{\mathbb R}  - \mathbb E (N_{\mathbb R})  \over \sigma(N_{\mathbb R}) } \mathop{=}\limits^{\rm d} {\rm N}[0,1].
 \end{equation}

 Recently, as part of the review \cite{BF23}, the CLT (\ref{1.1a}) was strengthened to a local central
 limit theorem (LCLT). To state the latter, let $p_{2k,N}$ denote the probability that an $N \times N$ GinOE matrix
 has, for $N$ even, exactly $2k$ real eigenvalues (since complex eigenvalues of real matrices occur in complex
 conjugate pairs, the parity of the matrix size and the number of real eigenvalues must agree). 
 Note that $\{p_{2k,N}\}$ define the distribution of $ N_{\mathbb R}$. With this notation
the LCLT in question reads  \cite[Prop.~2.4]{BF23} 
 \begin{equation}\label{2.1}
 \lim_{N \to \infty} {\rm sup}_{x \in (-\infty,\infty)} \Big |  \sigma(N_{\mathbb R})  p_{2k,N}\ |_{2k = \lceil \sigma(
 N_{\mathbb R}) x + \mathbb E (N_{\mathbb R}) \rceil} - {1 \over \sqrt{2 \pi}} e^{-x^2/2}\Big | = 0.
  \end{equation} 
 This a stronger convergence statement than the CLT (\ref{1.1a}), giving precise information about the individual
 probabilities, whereas the former gives an asymptotic equality for the cumulative sum of the probabilities.
 
 The purpose of this note is to generalise (\ref{2.1}) to the setting of the real eigenvalues for the elliptic
 GinOE class of random matrices. To define the latter, for $G_1,G_2 \in {\rm GinOE}$ we define a real symmetric matrix $S$, and real
 anti-symmetric matrix $A$, by setting $S= {1 \over 2}(G_1 + G_1^T)$ and $A= {1 \over 2} (G_2 - G_2^T)$.
 With $\tau$ a parameter, $0 < \tau < 1$, a member $X$ of the elliptic GinOE is defined in terms of $S$ and $A$ according to
 the linear combination
  \begin{equation}\label{SA}
 X = \sqrt{1 + \tau} S + \sqrt{1 - \tau} A.
 \end{equation}
 Note that with $\tau = 0$ a GinOE matrix is obtained, while for $\tau = 1$ the matrix is  real symmetric and in fact
 a member of the Gaussian orthogonal ensemble (see e.g.~\cite[\S 1.1]{Fo10}).
 With $X \mapsto X/\sqrt{N}$, it is known \cite{DGIL94,FJ96} that for $N \to \infty$ the eigenvalue density is
 supported in an ellipse with semi-axes $A = 1 + \tau$, $B= 1 - \tau$, and moreover upon normalising to integrate to unity
 has the constant value
 $1/(\pi(1 - \tau^2))$ in this support. It is also known that setting $\tau = 1 - \alpha^2/N$, with $\alpha > 0$ fixed, allows for 
 a well defined weakly non-symmetric limit  \cite{FKS97}.
 
 For the elliptic GinOE (indicated in the notation below by the superscript $\tau$), one has from \cite{FN08p}
  \begin{equation}\label{3.0a}
 \mathbb E (N_{\mathbb R}^\tau)    \mathop{\sim}\limits_{N \to \infty} \sqrt{ {2 \over \pi} {1 + \tau \over 1 - \tau} N}.
   \end{equation} 
 In the weakly non-symmetric limit this leading $\sqrt{N}$ behaviour  is replaced by an asymptotic form
 proportional to $N$ \cite{BKLL23}
 \begin{equation}\label{3.0b} 
  \mathbb E (N_{\mathbb R}^\tau) \Big |_{\tau = 1 - \alpha^2/N}   \mathop{\sim}\limits_{N \to \infty}  c(\alpha) N, \qquad c(\alpha) := e^{-\alpha^2/2} \Big ( I_0(\alpha^2/2) +
 I_1(\alpha^2/2)   \Big ).
   \end{equation} 
   Here $I_\nu(z)$ denotes the purely imaginary argument Bessel function. Also, for the corresponding variances we have \cite{BKLL23}
  \begin{equation}\label{3.0c} 
  \sigma^2(N_{\mathbb R}^\tau)    \mathop{\sim}\limits_{N \to \infty}  (2 - \sqrt{2})  \mathbb E (N_{\mathbb R}^\tau)
   \end{equation}   
   and
  \begin{equation}\label{4.0} 
  \sigma^2(N_{\mathbb R}^\tau)   \Big |_{\tau = 1 - \alpha^2/N}   \mathop{\sim}\limits_{N \to \infty}    \Big ( 2 - 2 {c(\sqrt{2} \alpha) \over c(\alpha)} \Big )  \mathbb E (N_{\mathbb R}^\tau).
  \end{equation}   
  A very recent result of Byun, Molag and Simm \cite{BMS23} gives the CLTs
   \begin{equation}\label{4.1} 
 \lim_{N \to \infty} { N_{\mathbb R}  - \mathbb E (N_{\mathbb R})  \over \sigma(N_{\mathbb R}) }\mathop{=}\limits^{\rm d} {\rm N}[0,1], \qquad 
  \lim_{N \to \infty}   { N_{\mathbb R}  - \mathbb E (N_{\mathbb R})  \over \sigma(N_{\mathbb R}) } \Big |_{\tau = 1 - \alpha^2/N}  \mathop{=}\limits^{\rm d} {\rm N}[0,1] .
  \end{equation}   
  Our main result extends (\ref{4.1}) to a LCLT.
  
  \begin{theorem}\label{T1}
  Let $N$ be even and let $ p_{2k,N}^\tau$ denote the probability that there are exactly $2k$ real eigenvalues for a matrix drawn from the elliptic
  GinOE. We have that $\{ p_{2k,N}^\tau \}$ satisfy the LCLT
   \begin{equation}\label{5.1}
\lim_{N \to \infty} {\rm sup}_{x \in (-\infty,\infty)} \Big |  \sigma(N_{\mathbb R}^\tau)  p_{2k,N} \, |_{2k = \lceil \sigma(
 N_{\mathbb R}^\tau) x + \mathbb E (N_{\mathbb R}^\tau) \rceil} - {1 \over \sqrt{2 \pi}} e^{-x^2/2} \Big | = 0.
  \end{equation}      
With $ \mathbb E (N_{\mathbb R}^\tau)$, $  \sigma(N_{\mathbb R}^\tau) $ appropriately specified, this holds both for $\tau$ fixed
with respect to $N$, and for the weakly non-symmetric scaling $\tau = 1 - \alpha^2/N$.
\end{theorem}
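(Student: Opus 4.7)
The plan is to follow the strategy used in \cite{BF23} for the $\tau = 0$ case, which rests on three ingredients: (i) the CLT (\ref{4.1}), already supplied by \cite{BMS23}; (ii) divergence of the variance $\sigma^2(N_{\mathbb R}^\tau) \to \infty$, which follows from (\ref{3.0a})--(\ref{4.0}); and (iii) real-rootedness of the probability generating polynomial
\begin{equation*}
P_N^\tau(z) := \sum_{k=0}^{N/2} p_{2k,N}^\tau \, z^k.
\end{equation*}
Since (i) and (ii) are in hand, the main work consists of establishing (iii) and then invoking the classical principle that a sequence of integer-valued random variables whose probability generating polynomials have only non-positive real zeros, whose CLT holds, and whose variance diverges, automatically obeys the corresponding LCLT.

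For step (iii) I would exploit the Pfaffian/skew-orthogonal polynomial representation of the elliptic GinOE real eigenvalue process developed in \cite{FN08p}. The real-real correlation functions are those of a Pfaffian point process on $\mathbb{R}$ with a $2 \times 2$ matrix correlation kernel $K_N^\tau$, and the standard Fredholm Pfaffian manipulation expresses the fugacity-weighted partition function as $P_N^\tau(z) = \mathrm{Pf}\bigl(J - (1-z)K_N^\tau\bigr)$ for the symplectic unit $J$. Reducing to a determinant and factorising over the spectrum of $J^{-1}K_N^\tau$ gives
\begin{equation*}
P_N^\tau(z) = \prod_j \bigl(1 - (1-z)\mu_j\bigr),
\end{equation*}
with $\mu_j \in [0,1]$; the eigenvalue bound reflects the fact that $J^{-1}K_N^\tau$ acts as a selection-type operator, so that positivity of the $k$-point correlations forces $\mu_j \in [0,1]$, exactly as for determinantal processes. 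Each factor is affine in $z$ with root $r_j = 1 - 1/\mu_j \le 0$, yielding real-rootedness.

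With real-rootedness in hand, $N_{\mathbb R}^\tau/2$ is distributed as a sum of independent Bernoulli random variables $\sum_j B_j$ with success probabilities $\mu_j$. For such sums the Lyapunov condition is automatic once the variance diverges, and the CLT (\ref{4.1}) together with (ii) therefore upgrades to the LCLT (\ref{5.1}) via the standard Fourier-analytic / lattice argument used in \cite[Prop.~2.4]{BF23}. Since this final step is insensitive to the scaling of $\tau$ with $N$, both the fixed-$\tau$ and weakly non-symmetric regimes are covered in one stroke.

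The principal obstacle is step (iii): verifying that the spectrum of $J^{-1}K_N^\tau$ lies in $[0,1]$ for the elliptic weight. For fixed $\tau \in (0,1)$ the $\tau = 0$ argument of \cite{BF23} ought to transcribe directly, with the elliptic skew-orthogonal polynomials of \cite{FN08p} playing the role of the Ginibre ones. For the weakly non-symmetric scaling $\tau = 1 - \alpha^2/N$ the bound is only needed at finite $N$, where the skew-orthogonal polynomial representation remains available for every $\tau \in (0,1)$, so real-rootedness persists; the results of \cite{BKLL23, FKS97} then enter only through the variance asymptotics (\ref{4.0}) that feed into (ii).
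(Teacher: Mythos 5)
Your three-ingredient strategy --- CLT, variance divergence, and real-rootedness of the probability generating polynomial, combined via Bender's LCLT theorem --- is precisely the paper's strategy, and your items (i) and (ii) are handled correctly. The gap is in step (iii). You assert that because the real eigenvalues form a Pfaffian point process with matrix kernel $K_N^\tau$, the generating function factorises over the spectrum of $J^{-1}K_N^\tau$, and that positivity of correlations forces that spectrum into $[0,1]$ ``exactly as for determinantal processes.'' This is not a valid step. The classical theorem constraining the spectrum to $[0,1]$ applies to determinantal processes with a \emph{self-adjoint} kernel $K$; for a Pfaffian process the operator $J^{-1}K_N^\tau$ is not self-adjoint, and there is no general principle forcing its spectrum to be real, let alone contained in $[0,1]$. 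The paper's concluding discussion illustrates exactly this obstruction: for products of $m\ge 2$ GinOE matrices, which also give Pfaffian point processes, the matrix in the analogous determinant formula for the generating function is not symmetric, and the location of its zeros (hence the LCLT) remains open despite the Pfaffian structure being available and despite a CLT holding by the route of \cite{Si17,FS21}.

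What the paper does instead is rely on the specific determinant identity of Proposition~\ref{Px} (taken from \cite{BMS23}), namely $Z_N^\tau(z) = \det[\delta_{j,k} + (z-1) M_N^\tau(j,k)]_{j,k=1,\dots,N/2}$, in which $M_N^\tau(j,k)$ given by (\ref{13.0}) is a weighted Hermite moment matrix and hence manifestly real symmetric. Real symmetry gives real eigenvalues, hence real zeros of $Z_N^\tau$; that the zeros are non-positive then follows automatically because the coefficients $p_{2k,N}^\tau$ are non-negative. The reason the paper obtains a symmetric matrix --- and you would not by simply unwinding the Fredholm Pfaffian --- is the derivative structure of the odd-indexed skew orthogonal polynomials in (\ref{q}), which lets the double integral $\alpha_{2j-1,2k}$ in (\ref{pm}) be collapsed by integration by parts into a single integral visibly symmetric in $j$ and $k$. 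That specific algebraic fact, not a generic positivity principle for Pfaffian processes, is what puts the zeros on the negative real axis; your proposal needs to supply it (or an equivalent) to close the argument.
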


 \section{Earlier examples of LCLTs in random matrix theory and methodology}\label{S3}
 To this author's knowledge, the first example of a LCLT identified in random matrix theory was in a setting
 analogous to that of Theorem \ref{T1} \cite{FM11}. Thus for $G_1,G_2 \in {\rm GinOE}$ form the spherical GinOE matrix $G_1^{-1} G_2$, so named
 since upon a stereographic projection its eigenvalues are naturally associated with a point process on the sphere \cite{EKS94}. For
 $N$ even let $p_{2k,N}^{\rm s}$ denote the probability that there are exactly $k$ real eigenvalues for a matrix drawn from the spherical GinOE,
 and introduce the generating function
 $$
 Z^{\rm s}_N(\zeta) := \sum_{k=0}^{N/2} \zeta^k p_{2k,N}^{\rm s}.
 $$
 Note that with $\zeta = e^{it}$ this has the interpretation as the characteristic function for the distribution of $\{ p_{2k,N}^{\rm s} \}$.
 In \cite{FM11} the factorisation formula 
    \begin{equation}\label{6.1}
  Z^{\rm s}_N(\zeta) = A_N \prod_{l=1}^{N/2} (1 + \zeta/u_l(N)), \qquad u_l(N) > 0,
   \end{equation} 
   for explicit $A_N,   u_l(N)$ was given, establishing in particular that all the zeros of $ Z^{\rm s}_N(\zeta)$  lie
   on the negative real axis in the complex $\zeta$-plane. We remark that this  implies  $\{ p_{2k,N}^{\rm s} \}$ is  an example of a P\'olya frequency sequence \cite{Sc55,Pi97}.
   
   There are at least two significant consequences of this property with regards to the distribution of $\{ p_{2k,N}^{\rm s} \}$. One is that a CLT holds,
   while the other is that the CLT can be strengthened to a LCLT. In relation to the CLT, for $0 \le \lambda_j \le 1$ $(j=1,\dots,N/2)$ denote by ${\rm Ber}[\lambda_j]$ the
   Bernoulli distribution for a random variable $x_j \in \{0,1\}$ specified by ${\rm Pr}(x_j = 1) = \lambda_j$, ${\rm Pr}(x_j = 0) = 1 - \lambda_j$
   $(0 < \lambda_j < 1)$. Consider the sum of $N/2$ ($N$ even) such random variables 
     \begin{equation}\label{6.2}
   \sum_{j=1}^{N/2} x_j \mathop{=}\limits^{\rm d} \sum_{j=1}^{N/2} {\rm Ber}[\lambda_j].
    \end{equation} 
    The characteristic function of this sum is
     \begin{equation}\label{7.1}   
  \prod_{j=1}^{N/2} (1 - \lambda_j + e^{it} \lambda_j).
   \end{equation} 
   Comparing (\ref{7.1}) with (\ref{6.1}) shows that we can identify $\zeta = e^{it}$ and $u_l(N) = (1 - \lambda_l)/\lambda_l$, with the requirement that
   $u_l(N) > 0$ being upheld since $\lambda_l$ takes on values between $0$ and $1$. For the sum (\ref{6.2}), standard arguments
    \cite[Section XVI.5, Theorem 2]{Fe66},  \cite{Ha67},  \cite{Ca80} gives that a CLT holds for $N \to \infty$ whenever the corresponding variance diverges
    in this limit. On this latter point, it can be established that $\sigma(N_{\mathbb R}^{\rm s}) \to \infty$ as $N \to \infty$ \cite{FM11} (in fact structurally the same
    asymptotic relation as (\ref{3.0c}) holds true, with the asymptotic value of the expectation on the RHS $\mathbb E (N_{\mathbb R}^{\rm s})$ now
    being given by $\sqrt{\pi N/2}$ which is $\pi/2$ times (\ref{E0})). Hence, by the Bernoulli random variable interpretation of (\ref{6.1}), we can conclude the validity of the
    CLT (\ref{4.1}) with $N_{\mathbb R}$ replace by $N^{\rm s}_{\mathbb R}$.
    
    We now come to the consequence of the factorisation (\ref{6.1}) in relation to the validity of a LCLT. By a theorem attributed to Newton relating to a
    property of the elementary symmetric polynomials (here in the variables $\{ u_l(N) \}$), see e.g.~\cite{Ni00}, the fact that  $u_l(N) \ge 0$ ($l=1,\dots,
    N/2$) implies that the coefficients of the series expansion of $Z_N^{\rm s}(\zeta)$, i.e.~$\{ p_{2k,N}^{\rm s} \}$, form a log-concave sequence,
  \begin{equation}\label{C1} 
  \log p_{2(k+1),N}^{\rm s} - 2      \log p_{2k,N}^{\rm s} +   \log p_{2(k-1),N}^{\rm s}      \le 0.
  \end{equation} 
  It is proved in \cite[Th.~2]{Be73} that (\ref{C1}) together with the CLT (\ref{7.1}) are sufficient for the validity of  
   the LCLT (\ref{2.1}) with $N_{\mathbb R}$ replaced by $N_{\mathbb R}^{\rm s}$.
   
   Subsequent to \cite{FM11}, in \cite{FL14} a related but distinct setting in random matrix theory giving rise to LCLTs was identified.
   This is in relation to the random variable $N(\Lambda)$ for the number of eigenvalues in a region $\Lambda$, in the circumstance that
   the volume $| \Lambda| \to \infty$ and that the eigenvalues for a determinantal point process with an Hermitian kernel (for more on the latter
   see e.g.~\cite{Bo11}). As an explicit example, consider the Ginibre unitary ensemble (GinUE) of standard complex Gaussian entries.
   In the limit $N \to \infty$ the eigenvalues form a determinantal point process with Hermitian kernel
    \begin{equation}\label{K1}
   {1 \over \pi} e^{-(|w|^2 + |z|^2)/2} e^{w \bar{z}};
    \end{equation} 
   see the recent review \cite{BF22}. Let $E_k(0;\Lambda)$ denote the probability that there are no eigenvalues in the region $\Lambda$,
   with $\{ E_k(0;\Lambda) \}$ specifying the random variable $N(\Lambda)$. The fact that (\ref{K1}) is an Hermitian kernel can be used to show that
   the generating function $\sum_{k=0}^\infty z^k E_k(0;\Lambda)$ only has negative real zeros. With it being known that the corresponding variance
   $\sigma^2(N(\Lambda))$ tends to infinity as $|\lambda| \to \infty$ \cite{MY80} (see too the recent review \cite[\S 2.9]{Fo23}), the same reasoning as used in the paragraph containing (\ref{6.2}) giving the  CLT (\ref{4.1}) with $N^{\rm e}_{\mathbb R}$ replace by $N^{\rm s}_{\mathbb R}$, and in the above paragraph containing (\ref{C1}) strengthening this to a LCLT, again applies. Consequently a LCLT theorem holds for the
   probabilities $ E_k(0;\Lambda)$ (replace $ p_{2k,N}^{\tau} $ with $ E_k(0;\Lambda)$, and $N_{\mathbb R}^{\tau}$ by    $N(\Lambda)$ in (\ref{5.1})).
   
 We see then that the main task in establishing a LCLT according to this strategy, after establishing that the variance diverges as $N \to \infty$,
  is to be able to show that the generating function for the underlying
 probabilities has all its zeros on the negative real axis. For the spherical GinOE this was immediate by knowledge of the explicit factorisation
 (\ref{6.1}).  In the recent review \cite{BF23}, a method independent of an explicit factorisation was introduced
  in the case of the
 $\{ p_{2k,N} \}$ for GinOE. Specifically, this was shown as a consequence of the determinant formula \cite{KPTTZ16}
  \begin{equation}\label{10.1}
  \sum_{k=0}^{N/2} z^k p_{2k,N} = \det \bigg [
  \delta_{j,k} + {(z-1) \over \sqrt{2 \pi}} {\Gamma(j+k-3/2) \over \sqrt{\Gamma(2j-1) \Gamma(2k-1)}} \bigg ]_{j,k=1}^{N/2}.
   \end{equation} 
   Thus let $\{ z_p \}_{p=1,\dots,N/2}$ denote the zeros of (\ref{10.1}). We observe that $\{1/(1-z_p) \}_{p=1,\dots,N/2}$ are the eigenvalues of the matrix
  \begin{equation}\label{11.0}
  \bigg [ {1 \over \sqrt{2 \pi}} {\Gamma(j+k-3/2) \over \sqrt{\Gamma(2j-1) \Gamma(2k-1)}} \bigg ]_{j,k=1}^{N/2}.
   \end{equation} 
   This is a real symmetric matrix and so all the eigenvalues are real, and hence so are the zeros $\{z_p\}$. Moreover, since $\{ p_{2k,N} \}$ are
   probabilities, these zeros are both real and non-positive (in fact since as established in \cite{KPTTZ16} $ p_{0,N} > 0$, which tells us that all
   the zeros are in fact negative), which is the required result.
   
   Let's return now to the consideration of a LCLT for $\{ p_{2k,N}^{\rm \tau}\}$ as relates to the random variable $N_{\mathbb R}^\tau$.
   The asymptotic formulas (\ref{3.0c}) and (\ref{4.0}) tell us that $\sigma^2(N_{\mathbb R}^\tau) \to \infty$ as $N \to \infty$ in both cases of interest,
    $\tau$ fixed and $\tau = 1 - \alpha^2/N$. Thus to give a proof
    of Theorem \ref{T1} it is sufficient to establish that the generating function
      \begin{equation}\label{12.1}
  Z_N^{\tau}(z) := \sum_{k=0}^{N/2} z^k p_{2k,N}^{\tau}
    \end{equation} 
    has all its zeros real and non-positive in the complex $z$-plane. We have seen in (\ref{10.1}) for the GinOE probabilities $\{ p_{2k,N} \}$
    that an avenue to deduce the location of the zeros is through a determinant formula involving a real symmetric matrix. In fact, as to be
    revised and discussed in the next section, such a formula has been given in the recent work \cite{BMS23}.

 \section{The zeros of $Z_N^{\tau}(z)$ and a proof of Theorem \ref{T1}}
 Let $H_n(x)$ denote the Hermite polynomials. The following determinant formula for $Z_N^{\tau}(z)$ holds.
 
 \begin{proposition}\label{Px} (Prop.~2.1 of \cite{BMS23})
 We have
  \begin{equation}\label{12.1a}
  Z_N^{\tau}(z) =  \det \Big [
  \delta_{j,k} +  (z-1) M_N^{\tau}(j,k) \Big ]_{j,k=1,\dots,N/2},
 \end{equation} 
 where
    \begin{equation}\label{13.0}
 M_N^{\tau}(j,k)      =  {1 \over \sqrt{2 \pi}} {(\tau/2)^{j+k-2} \over \sqrt{\Gamma(2j-1) \Gamma(2k - 1)}}
 \int_{-\infty}^\infty e^{-x^2/(1 + \tau)} H_{2j-2}(x/\sqrt{2 \tau})  H_{2k-2}(x/\sqrt{2 \tau}) \, dx.
 \end{equation} 
 \end{proposition}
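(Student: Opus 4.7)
The plan is to derive (\ref{12.1a}) from the Pfaffian point-process structure of the elliptic GinOE. First I would recall that for an $N \times N$ elliptic GinOE matrix with $N$ even, the probability $p_{2k,N}^{\tau}$ is obtained by integrating the joint eigenvalue density over the sector with exactly $2k$ real eigenvalues and $(N-2k)/2$ complex conjugate pairs in the upper half plane. This joint density is of Pfaffian type, with weight $e^{-x^2/(1+\tau)}$ on $\mathbb R$ and a companion weight on the complex sector (see e.g.\ \cite{BF23}). Multiplying the sector density by $z^k$ and summing over $k$ from $0$ to $N/2$ is equivalent to reweighting each real eigenvalue by $\sqrt{z}$, so that $Z_N^{\tau}(z)$ becomes the partition function of the elliptic GinOE relative to a modified weight on the real axis.

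Second, I would exploit the skew-orthogonal polynomial structure of the elliptic GinOE. The relevant monic skew-orthogonal polynomials of even degree are, up to normalisation, proportional to $(\tau/2)^{j} H_{2j}(x/\sqrt{2\tau})$; these diagonalise the skew inner product induced by the Gaussian weight $e^{-x^2/(1+\tau)}$, and their pairings assemble the matrix kernel governing the Pfaffian point process. The $\sqrt{z}$-reweighted partition function can then be written as a Fredholm Pfaffian involving the restriction of this matrix kernel to $\mathbb R$. A standard Pfaffian-to-determinant reduction, available because the matrix kernel has a $2 \times 2$ antisymmetric block structure with an effective rank equal to $N$, collapses the Fredholm Pfaffian onto a finite determinant of size $N/2$ of the form $\det[\delta_{j,k} + (z-1) M(j,k)]$.

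Third, I would identify the entries explicitly. Expressing the reduced operator in the basis $\{(\tau/2)^{j-1} H_{2j-2}(x/\sqrt{2\tau}) / \sqrt{\Gamma(2j-1)}\}_{j=1}^{N/2}$, and absorbing the Gaussian weight symmetrically across the two Hermite factors, yields precisely the integral appearing in (\ref{13.0}), with the overall $1/\sqrt{2\pi}$ arising from the Gaussian normalisation.

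The main obstacle is the careful bookkeeping of prefactors. The partition function normalisation of the elliptic GinOE, the norms of the even-degree skew-orthogonal polynomials, the splitting of the weight $e^{-x^2/(1+\tau)}$ between the two polynomial factors, and the square root implicit in the Pfaffian-to-determinant reduction must all align to produce exactly (\ref{12.1a}) with no stray constants; once this algebraic bookkeeping is performed the result follows mechanically, and this is essentially the route taken in \cite{BMS23}.
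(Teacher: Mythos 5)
Your high-level route is the same one the paper takes: both pass through the skew-orthogonal polynomial structure of the elliptic GinOE and collapse the Pfaffian/partition-function expression to an $N/2 \times N/2$ determinant of the form $\det[\delta_{j,k} + (z-1)M(j,k)]$. In fact, the paper's starting point, a formula from \cite{FN08p} for $Z_N^\tau(z)$ as a determinant of $z\alpha_{2j-1,2k} + \beta_{2j-1,2k}$ with $\alpha,\beta$ skew-inner-product matrix elements for arbitrary parity-respecting monic polynomials, is precisely what your ``reweight each real eigenvalue by $\sqrt{z}$ and reduce via skew-orthogonal polynomials'' step would produce; the paper simply cites that intermediate result rather than rederiving it.

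However, there is a genuine gap in the last step of your plan, and it is more than ``bookkeeping of prefactors.'' The skew-orthogonal pairing is block $2\times 2$, not diagonal: it pairs even-degree with odd-degree polynomials, $\langle q_{2j}, q_{2j+1}\rangle \ne 0$. Consequently, after substituting the skew-orthogonal polynomials, the surviving matrix entry is (a normalisation times) $\alpha_{2j-1,2k}$, which is a \emph{double} integral over $\mathbb R^2$ containing the $\mathrm{sgn}(y-x)$ kernel and the product $q_{2j-2}(x)\,q_{2k-1}(x)$ of an \emph{even}-indexed and an \emph{odd}-indexed polynomial. Nothing in ``absorbing the Gaussian weight symmetrically across the two Hermite factors'' explains how this becomes the single integral in (\ref{13.0}) involving two even-indexed Hermite polynomials $H_{2j-2}$ and $H_{2k-2}$. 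The missing structural fact, which the paper deliberately highlights, is the derivative formula (\ref{q}): $q_{2k+1}(z) = -(1+\tau)\,e^{z^2/(2(1+\tau))}\,\frac{d}{dz}\big(e^{-z^2/(2(1+\tau))}C_{2k}(z)\big)$. Because $q_{2k+1}$ is a weighted total derivative of $q_{2k}$, integrating by parts in the $x$-variable collapses the $\mathrm{sgn}(y-x)$ kernel and the odd-indexed factor, converting the double integral $\alpha_{2j-1,2k}$ into the single integral of two even-indexed Hermite polynomials against $e^{-x^2/(1+\tau)}$ that appears in (\ref{13.0}). Without invoking this derivative identity (or an equivalent), the final determinant entries you obtain would not match the stated form. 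This same observation is also what fails for products of GinOE matrices, which is why the paper singles it out as the essential ingredient.
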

 
 \begin{proof}
 It is remarked in \cite{BMS23} that (\ref{12.1a}) is equivalent to a determinant formula for
 $p_{2k,N}^{\tau}$ given in \cite[Eq.~(3.8)]{FN08p}. Not starting from the latter formula directly,
 but using other formulas from \cite{FN08p}, working is given in \cite{BMS23} to
 deduce (\ref{12.1a}). Due to the importance of this result to the proof of
 Theorem \ref{T1}, we will give a derivation here too, which in comparison to the one in
 \cite{BMS23} differs for the emphasis we place on a derivative structure present in the 
 underlying skew orthogonal polynomials.
 
 We will take as our starting point a minor variation of \cite[Eq.~(3.8) with the change of variables $x \mapsto x/\sqrt{1+\tau}$,
 $y \mapsto y/\sqrt{1 + \tau}$]{FN08p}, which states
    \begin{equation}\label{13.1}
  Z_N^{\tau}(z) = {(1 + \tau)^{-N/2}     \over 2^{N(N+1)/2} \prod_{l=1}^N \Gamma(l/2)}
  \det [ z \alpha_{2j-1,2k} + \beta_{2j-1,2k} ]_{j,k=1,\dots,N/2}.
   \end{equation} 
   Here, with $\{p_n(x) \}$ a set of monic polynomials of the indexed degree, and further having the
   same parity under the mapping $x \mapsto - x$ as the index, the quantities in the determinant are specified by
     \begin{align}\label{pm}
    & \alpha_{j,k} = \int_{-\infty}^\infty dx \int_{-\infty}^\infty dy \, e^{-(x^2 + y^2)/(2(1 + \tau))} p_{j-1}(x)  p_{k-1}(x) 
     {\rm sgn}\, (y-x) \nonumber \\
     & \beta_{j,k} =  2i \int_0^\infty dx \int_{-\infty}^\infty dy \, e^{(y^2 - x^2)/(1 + \tau)} {\rm erfc} \Big ( \sqrt{2 \over 1 - \tau^2} y \Big ) \nonumber \\
     & \qquad \qquad \times \Big ( p_{j-1}(x+iy)  p_{k-1}(x-iy) - ( p_{k-1}(x+iy)  p_{j-1}(x-iy) \Big ).
 \end{align}
 (In \cite[Eq.~(3.8)]{FN08p} the particular choice of monic polynomials with the required parity property $p_j(x) = x^j$ is made, but this
 is not what we want here.) 
 
 From the definitions one sees that
 $$
 \langle p_{j-1}, p_{k-1} \rangle := \alpha_{j,k} + \beta_{j,k}
 $$
 defines a skew-symmetric inner product. We know from \cite[Th.~1 and Eq.~(4.40)]{FN08p} that with
 $$
 C_k(z) := \Big ( {\tau \over 2} \Big )^{k/2} H_k(z/\sqrt{2 \tau}),
 $$
 the choice of monic polynomials $p_j(z) = q_j(z)$, where
   \begin{equation}\label{q}
q_{2k}(z) = C_{2k}(z), \qquad q_{2k+1}(z) = - (1 + \tau) e^{z^2/(2(1+\tau))} {d \over dz} \Big (
 e^{-z^2/(2(1+\tau))}   C_{2k}(z) \Big ),
   \end{equation} 
   have the  skew orthogonality property
   \begin{equation}\label{S.1}
   \langle q_{2j}, q_{2k} \rangle   =  \langle q_{2j+1}, q_{2k+1} \rangle = 0 \: \: (j,k=0,1,\dots), \qquad
     \langle q_{2j}, q_{2k+1} \rangle = 0 \: \: (j,k=0,1,\dots (j \ne k)) 
   \end{equation} 
   with the normalisation
   \begin{equation}\label{S.2} 
    \langle q_{2j}, q_{2j+1} \rangle      = (2j)! 2 \sqrt{2 \pi} (1 + \tau).
  \end{equation} 
  
  The monic skew orthogonal polynomials (\ref{S.1}) have the required parity property and so can be substituted in
  (\ref{pm}). Doing this, and substituting the result in (\ref{13.1}), simple manipulation using the skew orthogonality
  property shows
   \begin{equation}\label{S.3}
  Z_N^{\tau}(z) = 
  \det \bigg [ \delta_{j,k} + (z-1) {1 \over 2 \sqrt{2\pi}} {(1+\tau) \over \sqrt{\Gamma(2j-1) \Gamma(2k-1)} } \alpha_{2j-1,2k} \bigg ]_{j,k=1,\dots,N/2}.
   \end{equation}   
   From the definition in (\ref{pm}) of $ \alpha_{2j-1,2k}$, together with the derivative formula for $q_{2k+1}(z)$ as given in 
   (\ref{q}), the double integral in the former can be reduced to a single formula using integration by parts. This reduces (\ref{S.3})
   to the form (\ref{12.1a}), as required.

 \end{proof}   
 
 We are now in a position to establish the required property of the zeros of $Z_N^\tau(z)$ and thus to
 conclude the proof of Theorem \ref{T1}.
 The matrix 
   \begin{equation}\label{16.0}
   M_N^{\tau} := [  M_N^{\tau}(j,k)  ]_{j,k=1,\dots,N/2}
  \end{equation} 
  in (\ref{12.1a}) is real symmetric and so its eigenvalues are real (in fact they are positive
  since it is easy to verify from (\ref{13.0}) that   $M_N^{\tau} $ is positive definite). The
  reasoning of the paragraph containing (\ref{11.0}) tells us that all the zeros of $Z_N^\tau(z)$ are on
  the negative real axis in the complex $z$-plane. As noted in the final paragraph of Section \ref{S3}, this
  implies the validity of Theorem \ref{T1}.
  
  \section{Numerical calculations and discussion}
  The integral expression (\ref{13.0}) defining the matrix elements $M_N^\tau(j,k)$ is shown in \cite[Prop.~2.1]{BMS23} to permit an
  evaluation in terms of the ${}_2 F_1$ hypergeometric function
  \begin{multline}\label{16.1}  
 M_N^{\tau}(j,k) =   {1 \over 2 \sqrt{2\pi}}  \Big ( {1 + \tau \over 1 - \tau} \Big )^{1/2}
 {\Gamma(k-j-3/2) \over  \sqrt{\Gamma(2j-1) \Gamma(2k-1)} } \\
 \times {}_2 F_1(k-j+1/2,j-k+1/2;-j-k+5/2;-\tau/(1- \tau) ).
 \end{multline}
 With $\{ \lambda_j^\tau(N) \}_{j=1,\dots,N/2}$ the eigenvalues of the matrix $M_N^\tau$ (\ref{16.0}), it follows from 
 (\ref{12.1}) that
   \begin{equation}\label{16.1a}
  Z_N^{\tau}(z) = \prod_{l=1}^{N/2} ( 1 + (z - 1) \lambda _j^\tau(N) ).
  \end{equation} 
  
  For given $\tau$, the matrix elements in (\ref{16.1}) can be computed to high precision, using the computer algebra software
  Mathematica for example. With $N$ fixed, this allows the  eigenvalues of $ M_N^{\tau}$ to be computed to high precision. Hence by
  (\ref{16.1a}) $Z_N^{\tau}(z) $ is known in product form. The coefficients in the series expansion of this product form are,
  by (\ref{12.1}),   $\{ p_{2k,N}^{\rm \tau}\}$. These can be extracted from the formula for a Fourier coefficient of a Fourier series
  $$
  p_{2k,2M}^{\rm \tau} = {1 \over M} \sum_{l=0}^{M-1}  Z_N^{\tau}( e^{2 \pi i l /M})  e^{-2 \pi i l  k/M},
  $$
  where we have set $M = N/2$ for convenience. This gives rise to high precision evaluation of $\{ p_{2k,N}^{\rm \tau}\}$, as can be
  checked from the requirement that the probabilities sum to 1. As an example
   \begin{equation}\label{E.1}
 p_{2k,N}^{\rm \tau} \Big |_{N = 80, k = 20 \atop \tau = 1/2} = 7.946014632966 \cdots \times 10^{-23}.
   \end{equation} 
   
   It turns out that knowledge of the eigenvalues can also be used to  compute a large $N$ Stirling-type asymptotic expression for
 $\{ p_{2k,N}^{\rm \tau}\}$ \cite{Ha56,Pi97}. For $r>0$ define
  \begin{equation}\label{A.r}
  a(r) = r {d \over dr}  Z_N^{\tau}(r) = r \sum_{l=1}^{N/2} {\lambda_l^\tau(N) \over 1 + (r-1) \lambda_l^\tau(N)}, \quad
  b(r) = r {d \over dr}  a(r) = r \sum_{l=1}^{N/2} {\lambda_l^\tau(N)(1 - \lambda_l^\tau(N)) \over (1 + (r-1) \lambda_l^\tau(N) )^2}.
   \end{equation}  
   These correspond to the mean and variance of the discrete probability sequence $\{ r^k p_{2k,N}^{\rm \tau}/ Z_N^\tau(r) \}_{k=0}^{N/2}$.
   In the setting that all the zeros of $Z_N^\tau(z)$ are on the negative real axis in the complex $z$-plane, one has that the equation
   $a(r) = k$ has a unique positive solution $r_k$ say; see \cite[below (18)]{Pi97}. The Stirling-like formula is in terms of $b(r)$ in (\ref{A.r}),
   the generating function $Z_N^\tau(r)$ and $r_k$, and reads \cite[Eq.~(26)]{Pi97}
     \begin{equation}\label{P.r}
  p_{2k,N}^{\rm \tau}  = {1 \over \sqrt{2 \pi b(r_k)}} {Z_N^\tau(r_k) \over (r_k)^k} (1 + \varepsilon_k).
   \end{equation}  
   Here $|\varepsilon_k|$ is bounded by $C/b(r_k)^{1/2}$ for some $C>0$ independent of $k$ (in \cite[text below (27)]{Pi97} it is suggested
   that in the bound $b(r_k)^{1/2}$ can be replaced by $b(r_k)$). Computing the approximation (\ref{P.r}) for the setting of (\ref{E.1}) gives
    \begin{equation}\label{E.2}
 p_{2k,N}^{\rm \tau} \Big |_{N = 80, k = 20 \atop \tau = 1/2} \approx 7.943 \times 10^{-23}.
  \end{equation}  
  The ratio of the approximation (\ref{E.2}) to the exact value (\ref{E.1}) is $0.9996$. On the other hand $b(r_k)$ evaluates to $1.249\cdots$ so
  the bound(s) noted below (\ref{P.r}) is not informative apart from the sign. We remark that a Stirling-like formula analogous to (\ref{E.2}) has been used in the recent work
  \cite{Bo23} in the context of a study of corrections to the known random matrix limit for the distribution of the longest increasing subsequence length of
  a random permutation. We remark that for $\tau = 0$ (GinOE case) and $k$ proportional to $N$, $k/N = \mu$ say, the asymptotic formula
  $p_{2k,N}^{\rm \tau} |_{\tau = 0} \sim e^{-c(\mu) N^2}$ has been established, where the proportionality $c(\mu)$ can be interpreted as an 
  energy for a certain two-dimensional electrostatics problem \cite{GPTW16}.

  Of interest is the rate of convergence to the limit law implied by (\ref{5.1}), and thus a bound on the difference
     \begin{equation}\label{E.3}
     p_{2k,N} - {1 \over \sqrt{2 \pi \sigma^2(N_{\mathbb R}^\tau)}} e^{ -  (2k -  \mathbb E(N_{\mathbb R}^\tau ))^2 /(2 \sigma^2(N_\mathbb R))}.
 \end{equation}
 In fact \cite[Eq.~(25)]{Pi97} gives that there exists a $C>0$ such that the absolute value of (\ref{E.3}) is less than $C/\sigma^2(N_{\mathbb R}^\tau)$ for each
 $k=0,1,\dots,N/2$. This bound is evident from numerical computation (see Figure \ref{Fig1-23}), which moreover suggests that (\ref{E.3}) when  multiplied by
 $\sigma^2(N_{\mathbb R}^\tau)$ tends to a well defined limiting functional form. See \cite{FM23,Bo23,Bo23a} for the recent identification and study
 of an analogous property for the longest increasing subsequence problem alluded to in the previous paragraph.  
    \begin{figure*}
\centering
\includegraphics[width=0.7\textwidth]{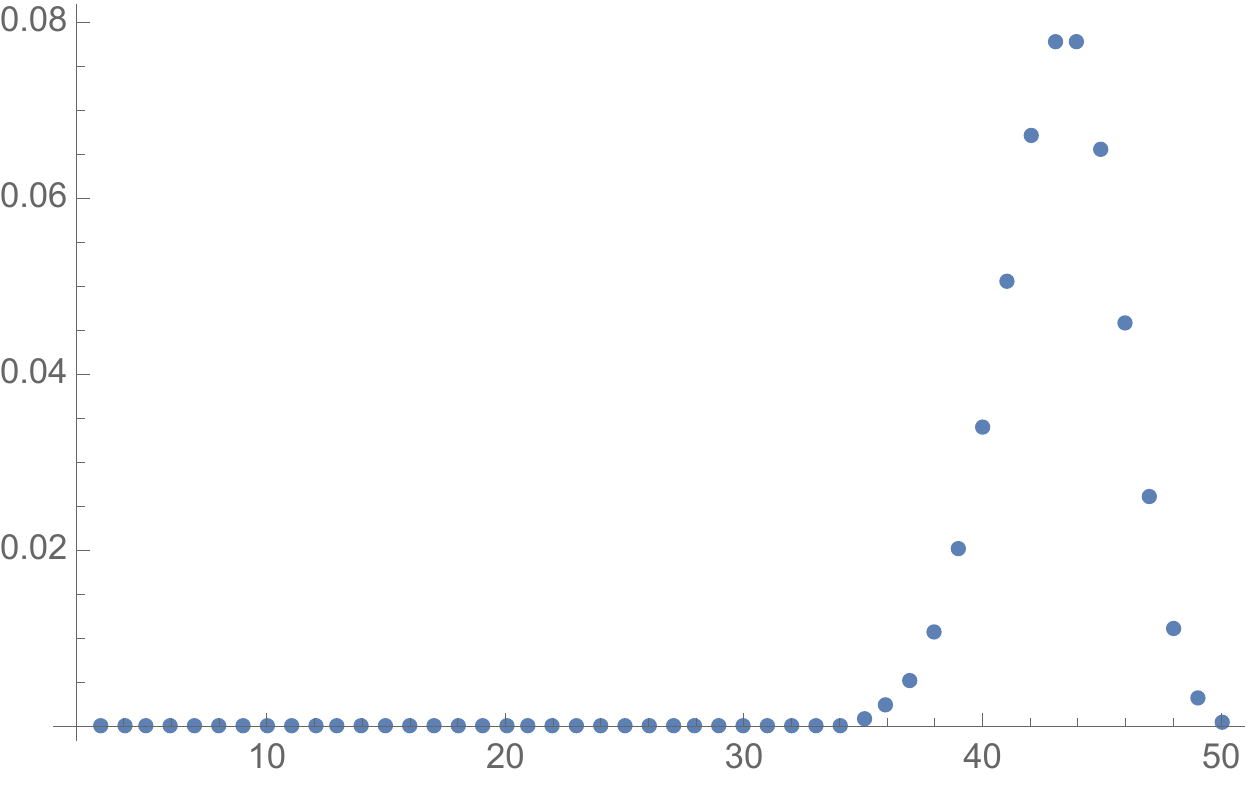}
\caption{Plot of the difference (\ref{E.3}) as a function of $k$ for $N=100$ and $\tau = 0.99$, for which
$\mathbb E_{\mathbb R}^\tau/2 \approx 41.1$ and $ \sigma^2(N_{\mathbb R}^\tau) \approx 25.9$. }
\label{Fig1-23}
\end{figure*}
  
 Below (\ref{SA}) it was noted that the $\tau = 0$ case of the elliptic GinOE corresponds to the GinOE, so the former is a generalisation of the latter.
 Another generalisation of the GinOE is to consider products of $m$ independent GinOE matrices.
  A determinant formula for the generating function of the probabilities of the real eigenvalues, which like (\ref{10.1}) and (\ref{12.1a})
 is derived based on a skew orthogonal polynomial formalism, has been given in \cite[Th~1]{FI16}. While this allows for exact computation of the probabilities 
 for small matrix size in the case $m=2$,  the matrix in the determinant formula is not symmetric and so the location of its zeros are not evident.
  As emphasised in the proof of Proposition 
 \ref{Px}, a key ingredient in deducing a determinant formula involving a symmetric matrix in the skew orthogonal polynomial formalism
 is a derivative formula for $q_{2k+1}(z)$ as in (\ref{q}). In the case of products, while $q_{2k+1}(z)$ remains a simple linear combination of two
 monomials, a derivative formula no longer holds.  Nonetheless, since by the use of hypergeometric polynomial evaluation formulas for the underlyiing
 Meijer G-functions \cite{Ku15} we have available
 the exact probabilities of the real eigenvalues for $m=2$ and small $N$ (specifically $N=6$; see \cite[Table 1]{FI16}), we can compute the location of
 the zeros the generating function numerically. It is found that all the zeros are again on the negative real axis. The mechanism for this, and thus a
 LCLT remains to be found. Using analysis that is independent of the location of the zeros of the generating function, but rather based instead on the Pfaffian point process
 structure of the correlation functions, a CLT in this setting has been established
 in \cite{Si17,FS21}. Further, in \cite{AB23}, formulas from the Pfaffian point process structure have further been used to 
 determine the asymptotic form of $\mathbb E(N_{\mathbb R}^m)$ and $\sigma^2(N_{\mathbb R}^m)$
 (here the use of the superscript $m$ denotes the $m$ products) in the double scaling $N,m \to \infty$ with
 $m/N$ fixed
 limit. 
 
 Exact results for the probability of a prescribed number of real eigenvalues for 
 products of $m$ real random matrices are also known in the case that the individual matrices are constructed from
an $N \times N$  truncation of a Haar distributed $(n+N) \times (n+N)$ real orthogonal matrix \cite{FIK20}. The case $m=1$ was
further considered in the review \cite{BF23}. There, due to a derivative structure for the odd indexed skew orthogonal polynomials
analogous to that in (\ref{q}), a real symmetric matrix formula for the generating function of the probabilities
was shown to hold true, from which the validity of a LCLT follows \cite[paragraph below Prop.~4.7]{BF23}. In the case $n=1$ this generating
function first appeared in the work \cite{PS18}. However, as for the products of GinOE matrices, a derivative structure for the odd indexed skew orthogonal polynomials
 in the cases $m \ge 2$ no longer holds, and the matrix in the determinant formula for the generating function is no longer
symmetric. From the tabulation \cite[Table 1]{FIK20} low order cases (specifically $n=N=4$, $m=2$ and $m=3$) can be explored numerically, with
the result that the zeros are found to be on the negative real axis. It seems that the alternative method of \cite{Si17,FS21} of studying a CLT for the
real eigenvalue fluctuations via the underlying Pfaffian point process structure has yet to be followed through, although the work \cite{LMS22} (see \cite{KSZ09} for the
case $n=1$) has made use of this
structure to compute the asymptotics of the expected number of real eigenvalues in various regimes, in particular $n$ fixed and $n$ proportional to
$N$.

 \subsection*{Acknowledgments}
	The research of PJF is part of the program of study supported
	by the Australian Research Council  Discovery Project grant DP210102887.
	Thanks are due to Leslie Molag for his presentation on \cite{BMS23} in the Bielefeld-Melbourne 
	random matrix theory seminar before its appearance on the arXiv, and to Sung-Soo Byun for
	helpful feedback on a draft of the manuscript.

\providecommand{\bysame}{\leavevmode\hbox to3em{\hrulefill}\thinspace}
\providecommand{\MR}{\relax\ifhmode\unskip\space\fi MR }
\providecommand{\MRhref}[2]{%
  \href{http://www.ams.org/mathscinet-getitem?mr=#1}{#2}
}
\providecommand{\href}[2]{#2}

\end{document}